\def\XXint#1#2#3{{\setbox0=\hbox{$#1{#2#3}{\int}$ }
\vcenter{\hbox{$#2#3$ }}\kern-.6\wd0}}
\newcommand{\margnote}[1]{
\ifthenelse{\boolean{shownotes}}%
{\marginpar{\raggedright\tiny\texttt{#1}}}%
{}%
}
\newcommand{\hole}[1]{
\ifthenelse{\boolean{shownotes}}%
{\begin{center} \fbox{ \rule {.25cm}{0cm}
\rule[-.1cm]{0cm}{.4cm} \parbox{.85\textwidth}{\begin{center}
\texttt{#1}\end{center}} \rule {.25cm}{0cm}}\end{center}}
{}
}
\newcommand{\E}{\mathbb{E}}
\newcommand{\tE}{\tilde{\mathbb{E}}}
\newcommand{\F}{\mathcal{F}}
\newcommand{\tF}{\tilde{\mathcal{F}}}
\newcommand{\tP}{\tilde{\mathbb{P}}}
\newcommand{\bP}{\mathbb{P}}
\newcommand{\Pac}{\mathcal{P}_c^{\mathrm{ac}}(\R^d)}
\newcommand{\R}{\mathbb{R}}
\newcommand{\uh}{\hat{u}}
\newcommand{\mh}{\hat{\mu}}
\newcommand{\e}{\varepsilon}		       
\newcommand{\dive}{\mathop{\mathrm {div}}}
\newcommand{\weaktos}{\stackrel{*}{\rightharpoonup}}
\newcommand{\de}{\,\mathrm{d}}
\renewcommand{\P}{\mathcal{P}(\R^d)}
\newcommand{\Pp}{(\mathcal{P})}
\newcommand{\ppe}{(\mathcal{P}_\e)}
\newcommand{\Pn}{\mathcal{P}_2(\mathbb{R}^d)}
\newcommand{\bproof}{\begin{proof}}
\newcommand{\eproof}{\end{proof}}
\newtheorem{Theorem}{\bf Theorem}
\newtheorem{lemma}{\bf Lemma}
\newtheorem{definition}[lemma]{\bf Definition}
\newtheorem{proposition}[lemma]{\bf Proposition}
\newtheorem{remark}[lemma]{\bf Remark}
\newcommand{\auth}[1]{{ #1}}
\newcommand{\tit}[1]{{\rm #1}}
\newcommand{\jou}[1]{{\it #1}}
\newcommand{\pp}[1]{pp.~#1}
\newcommand{\eps}{\varepsilon}
\newcommand{\schema}[1]{\b{\sc #1}}
\newenvironment{pschema}[1]{\vspace{3mm}
\noindent\begin{Sbox}\begin{minipage}{.95\columnwidth}\vspace{2mm}\begin{center}{\large \schema{#1}}\vspace{5mm}\\
\begin{minipage}{0.9\textwidth}}{\end{minipage}\end{center}\vspace{2mm}\end{minipage}\end{Sbox}\fbox{\TheSbox}\vspace{3mm}}
\renewcommand{\b}[1]{{\bf #1}}
\newcommand{\Lip}{\text{Lip}}
\numberwithin{equation}{section}
\begin{document}

\title[Vanishing viscosity for linear-quadratic mean-field  control problems]{Vanishing viscosity for linear-quadratic mean-field  control problems}

\author[G. Ciampa]{Gennaro Ciampa}
\address[G. Ciampa]{Dipartimento di Matematica ``Tullio Levi Civita''\\ Universit\`a degli Studi di Padova\\Via Trieste 63 \\35131 Padova \\ Italy}
\email[]{\href{ciampa@}{ciampa@math.unipd.it}, \href{gennaro.ciampa@}{gennaro.ciampa@unipd.it}}

\author[F. Rossi]{Francesco Rossi}
\address[F. Rossi]{Dipartimento di Matematica ``Tullio Levi Civita''\\ Universit\`a degli Studi di Padova\\Via Trieste 63 \\35131 Padova \\ Italy}
\email[]{\href{rossi@}{francesco.rossi@math.unipd.it}}

\maketitle

\begin{abstract} 
We consider a mean-field control problem with linear dynamics and quadratic control. We apply the vanishing viscosity method: we add a (regularizing) heat diffusion with a small viscosity coefficient and let such coefficient go to zero. The main result is that, in this case, the limit optimal control is exactly the optimal control of the original problem.
\end{abstract}

\section{Introduction}
Linear-Quadratic problems (LQ from now on) for finite-dimensional systems are the easiest non-trivial examples in optimal control, see \cite{sontag}. Their use in control theory is ubiquitous, in particular as the simplest stabilizers around a nominal trajectory. Thus, any optimal control theory for a new class of systems should confront itself with LQ problems. This article aims to define and solve LQ problems for deterministic mean-field control systems.

Mean-field equations are the natural limit of a large number $N$ of interacting particles when $N$ tends to infinity. The {\bf state} of the system is then a density or, more in general, a measure. We can apply a control, that is an external vector field, to steer the system to a desired configuration or, as in our case, to optimize some cost. The resulting dynamics is called a {\bf mean-field control problem}. See a general treatement of these problems in \cite{BFY,BSYY}.

Mean-field control problems are intimately related to mean-field games, as clearly explained in \cite{BFY}. We recall that mean-field games (first introduced in \cite{HMC1,LL}) describe the limit of a large number of interacting particles in which each particle optimizes a personal cost, in the spirit of Nash differential games. Instead, in mean-field control an (external) controller aims to minimize a global cost for the whole population. In this sense, our contribution aims to provide one more theoretical tool (the vanishing viscosity method) to the study of mean-field control problems. We will focus on LQ models: for mean-field games, they were first studied in \cite{HCM2}, where they are called LQG systems (G stands for Gaussian noise).\\

In this article, we study two deeply connected mean-field control problems. The first corresponds to the mean-field of a deterministic ordinary differential equation, that is a continuity equation for the measure. Its expression is
\begin{equation}\label{e:d1}
\partial_t \mu_t+\dive(b(t,x,\mu_t,u)\mu_t)=0,
\end{equation}
where $\mu_t$ is a time-dependent measure and $b(t,x,\mu_t,u)$ is a vector field, depending on time, space, the measure itself, and the control $u$. The second mean-field control equation is instead the mean-field of a stochastic ordinary differential equation with additive Brownian motion $\sqrt{2\e}W_t$, that is an advection-diffusion equation. Its explicit expression is very similar, as it is 
\begin{equation}\label{e:p1}
\partial_t \mu_t+\dive(b(t,x,\mu_t,u)\mu_t)=\e \Delta \mu_t,
\end{equation}
where $\e \Delta \mu_t$ represents the heat diffusion. It is natural to ask if solutions of \eqref{e:p1} converge to \eqref{e:d1} when $\e\to 0$, i.e. when passing from the probabilistic to the deterministic mean-field problem. Such limit is known as the vanishing viscosity method. It is not hard to prove that we have convergence for a given fixed control $u$, see Lemma \ref{lem:2} below.

We now couple the (deterministic or probabilistic) dynamics for the measure $\mu_t$ with a given running+final cost  \begin{equation}\label{def:J}
   J(\mu,u)= \int_0^T\hspace{-0.2cm}\int_{\R^d} f(t,x,\mu_t,u)\de\mu_t \de t+\int_{\R^d}g(x,\mu_T)\de\mu_T.
\end{equation}
We then have two optimal controls: on one side, the minimizer $u$ of the deterministic optimal control problem coupling \eqref{e:d1} and \eqref{def:J}; on the other side, the family of minimizers $u^\e$ of the probabilistic optimal controls coupling \eqref{e:p1} and \eqref{def:J}, indexed by $\e>0$. In this case, one can again ask the following questions related to vanishing viscosity:
 \begin{itemize}
     \item Do we have convergence of optimal controls $u^\e\to u$?
     \item Do we have convergence of optimal trajectories $\mu^\e_t\to \mu_t$?
     \item Do we have convergence of costs $J(\mu^\e,u^\e)\to J(\mu,u)$?
 \end{itemize}
 
Such questions do not have a general answer. Our main result (Theorem \ref{t:main}) states that, if the dynamics is linear and the cost is quadratic, all answers are positive.\\

The structure of the article is the following. We first fix notation in Section \ref{s-notation}. In Section \ref{s-problem} we define the deterministic and probabilistic problems $\Pp, \ppe$ and state our main result.  We then study the probabilistic problem $\ppe$ in Section \ref{s-ppe}. In Section \ref{s-proof} we prove the main result, that is convergence of the solution of $\ppe$ to the one of $\Pp$. We draw some conclusions in Section \ref{s-conclusions}.

\subsection{Notation}
\label{s-notation}
We denote with $x'$, ($M'$) the transpose of the vector $x$ (matrix $M$). We write that a matrix $Q$ satisfies $Q> 0$, $Q\geq 0$ when it is positive definite (resp. semi-definite).

We will work on the Euclidean space $\R^d$ and we denote by $\P$ the space of probability measures on $\R^d$. The set $\Pn$ will be the subset of the probability measures with finite second moment, that is of measures $\mu$ satisfying
\begin{equation*}
\int_{\R^d}|x|^2\de\mu<\infty.
\end{equation*}

We will mostly focus on the subspace of absolutely continuous measures with compact support $\Pac$. We denote with $\bar{\mu}$ the {\em barycenter} of the measure $\mu$, that is defined as
\begin{equation*}
\bar{\mu}:=\int_{\R^d}\xi\, \mu(\de \xi).
\end{equation*}

We also denote with $\mu_n\weaktos\mu$ the standard weak-$*$ convergence of measures. We recall that it means the following:
\begin{equation*}
\forall \varphi\in C^\infty_c(\R^d) \mbox{~~it holds~~}\int_{\R^d}\varphi\,d\mu_n\to \int_{\R^d}\varphi\,d\mu.
\end{equation*}

Finally, we denote the space of Lipschitz functions as 
$$
\Lip(\R^d):=\left\{f:\R^d\to\R^d\ |\ \exists L>0 ~\mbox{s.t.}~\forall x,y\in \R^d~~|f(x)-f(y)|\leq L |x-y|\right\}.
$$

\section{Problem statement} \label{s-problem}

In this article, we consider two LQ mean-field control problems. This means that:
\begin{itemize}
    \item the dynamics is linear, i.e. the vector field is of the form 
    \begin{equation}\label{def:b}
b(t,x,\mu,u)=A(t)x+B(t)u+\bar{A}(t)\bar{\mu}_t,
\end{equation}
\item the cost is quadratic, i.e the cost $J(\mu,u)$ defined in \eqref{def:J} is of the form
\begin{equation}\label{def:f}
f(t,x,\mu,u)=\frac{1}{2}[ x'Q(t)x+u'R(t)u+(x-S(t)\bar{\mu}_t)'\bar{Q}(t)(x-S(t)\bar{\mu}_t)],
\end{equation}
\begin{equation}\label{def:g}
g(x,\mu)=\frac{1}{2}\left[ x'Q_Tx+(x-S_T\bar{\mu}_T)'\bar{Q}_T(x-S_T\bar{\mu}_T)\right],
\end{equation}
\end{itemize}

Here, all operators $A(t),B(t),\bar{A}(t),Q(t),R(t),S(t)$, $\bar{Q}(t),Q_T,S_T,\bar{Q}_T$ are linear operators $\R^d\to \R^d$, i.e. square matrices. The first 7 of them are continuous functions of time, defined for all $t\in[0,T]$. We will often omit the dependence of the matrices on time, for simplicity of notation.

\begin{remark}
It is interesting to observe that both the dynamics $b$ and the cost $J(\mu,u)$  contains terms depending on the barycenter $\bar\mu_t$ of the state. Such choice is given by the following fact, see \cite{BFY}: in mean-field control problems, it is interesting to steer the state measure either towards its barycenter or far from it (i.e concentrate or dissipate the population).
\end{remark}

We will assume standard symmetry and positive-definiteness of matrices from now on. We also need to add two conditions related to the barycenter terms. They are summarized here:

\newcommand{\M}{{(}M{)}}
\begin{pschema}{{(}M{)}}
\begin{itemize}
\item {\bf Running cost:} for all $t\in[0,T]$, the matrices $Q(t),\bar{Q}(t), R(t)$ are symmetric with $Q(t),\bar{Q}(t)\geq 0$ and $R(t)>0$.

\item {\bf Barycenter in the running cost:} for all $t\in[0,T]$ it holds
\begin{equation}\label{cond:qs}
Q(t)+(I-S(t))'\bar{Q}(t)(I-S(t))\geq 0.
\end{equation}

\item {\bf Final cost:} it holds $Q_T,\bar{Q}_T$ symmetric, both satisfying $Q_T,\bar{Q}_T\geq 0$.

\item {\bf Barycenter in the final cost:} it holds 
\begin{equation}\label{cond:qsT}
Q_T+(I-S_T)'\bar{Q_T}(I-S_T)\geq 0.
\end{equation}
\end{itemize}

\end{pschema}

Observe that matrices \eqref{cond:qs}-\eqref{cond:qsT} are symmetric by definition.\\

We are now ready to define the deterministic problem $\Pp$:

\begin{pschema}{Problem $\Pp$}
Find
\begin{equation*}
    \min_{u\in\mathcal{U}}J(\mu_t,u),
\end{equation*}
such that $\mu_t\in C([0,T];\Pn)$ is a solution of 
\begin{equation}\label{e:dP}
\begin{cases}
\partial_t \mu_t+\dive(b(t,x,\mu_t,u)\mu_t)=0,\\
\mu_t|_{t=0}=\mu_0.
\end{cases}
\end{equation}
The cost $J(\mu,u)$ is quadratic, given by \eqref{def:J}, \eqref{def:f}, \eqref{def:g} and the matrices satisfy \M.
The dynamics is linear, given by \eqref{def:b}. The initial state satisfies $\mu_0\in\Pac$.

The set of admissible controls is
$$
\mathcal{U}:=L^1((0,T);\Lip(\R^d)).
$$
\end{pschema}

The probabilistic (or viscous) problem is very similar:

\begin{pschema}{Problem $\ppe$}
Take $\Pp$ and replace the dynamics \eqref{e:dP} with

\begin{equation}\label{e:ppe}
\begin{cases}
\partial_t \mu_t+\dive(b(t,x,\mu_t,u)\mu_t)=\e \Delta \mu_t,\\
\mu_t|_{t=0}=\mu_0.
\end{cases}
\end{equation}
\end{pschema}

As already stated above, the crucial difference between $\Pp$ and $\ppe$ is given by the presence of a viscosity term $\e \Delta\mu_t$. This corresponds to the fact that the underlying dynamics in $\Pp$ is deterministic, while in $\ppe$ it is associated to an additive Brownian motion $\sqrt{2\eps}W_t$. From the mathematical point of view, the dynamics of $\ppe$ is easier to study than $\Pp$, as the viscosity term (that is a heat diffusion) ensures stronger regularity of the solution. For this reason, it is classical to study the regular viscous problem $\ppe$ and hope to pass to the limit to infer something on the original deterministic problem $\Pp$. Such method, known as vanishing viscosity, can be traced back to \cite{K1, K2, Mc}.

Such more theoretical aspect also has an applied interest: indeed, regularity of the viscous problem implies that standard numerical methods to solve it can be applied, then the deterministic problem can be solved by convergence.

Our aim is then to apply the vanishing viscosity method to our problem: we study the convergence of solutions of $\ppe$ to solutions of $\Pp$ for $\e\to 0$. In the LQ setting, we have convergence, as stated in our main result.
\begin{Theorem}\label{t:main}
Let $(\mh^\e,\uh^\e)$ be solutions of $\ppe$. Then, there exists a solution $(\mh,\uh)$ of $\Pp$ such that, for $\e\to 0$, it holds:
\begin{itemize}
\item[$(i)$] $\uh^\e\to \uh$ in $\mathcal{U}=L^1((0,T);\Lip(\R^d))$;
\item[$(ii)$] $\mh^\e\to \mh$ in $C([0,T],\Pn)$;
\item[$(iii)$] $J(\mh^\e,\uh^\e)\to J(\mh,\uh)$.
\end{itemize}
\end{Theorem}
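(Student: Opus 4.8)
The plan is to exploit the linear--quadratic structure to reduce both problems to finite-dimensional ODE systems for the first two moments of the state, and to show that the viscosity enters only through a lower-order, $O(\e)$ perturbation of the second-moment equation. First I would introduce the barycenter $\bar\mu_t$ and the covariance matrix $C_t:=\int_{\R^d}(x-\bar\mu_t)(x-\bar\mu_t)'\de\mu_t$ and compute the evolution of both along \eqref{e:ppe}. Testing the equation against the coordinate functions $x_i$ and against $(x_i-\bar\mu_i)(x_j-\bar\mu_j)$ and integrating by parts, the crucial observation is that $\int_{\R^d} x\,\Delta\mu_t\de x=0$ while $\int_{\R^d}x_ix_j\,\Delta\mu_t\de x=2\delta_{ij}$: hence the barycenter equation is \emph{identical} for $\Pp$ and $\ppe$, whereas the covariance equation for $\ppe$ differs from that of $\Pp$ only by the additive term $+2\e I$.

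Next I would decompose the quadratic cost \eqref{def:J} using $x=\bar\mu_t+(x-\bar\mu_t)$, which splits $J$ into a ``mean part'', depending only on the trajectory $t\mapsto\bar\mu_t$ and on the mean control $\bar u_t:=\int u\de\mu_t$, and a ``fluctuation part'', depending only on $C_t$ and on the spatial variation of $u$. The running mean cost carries the matrix $Q+(I-S)'\bar Q(I-S)$, which is exactly the quantity required to be $\ge0$ in \eqref{cond:qs}, while the fluctuation cost carries $Q+\bar Q\ge0$; together with $R>0$ this makes both reduced problems genuinely convex LQ problems. I would then argue that an affine-in-space feedback $u(t,x)=\kappa(t)+K(t)(x-\bar\mu_t)$ is optimal: among all admissible fields inducing a prescribed pair of moment trajectories, the affine one minimizes the only control-dependent term $\int u'Ru\de\mu_t$ (a Jensen/completion-of-squares argument, using that for $\e>0$ the heat smoothing gives $\mu^\e_t$ full support, so the feedback is uniquely pinned down). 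The mean subproblem is then a standard deterministic LQ system for $\bar\mu_t$, \emph{independent of $\e$}, and the fluctuation subproblem is a bilinear control problem for $C_t$ whose optimal gain, by an HJB/Riccati computation, is $K=-R^{-1}B'\Pi$ with $\Pi$ solving a Riccati equation that does not see the $+2\e I$ term; the noise contributes only the additive, $O(\e)$ cost $2\e\int_0^T\Tr{\Pi(s)}\de s$.

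From this structure the three claims follow. Since both $\kappa^\e$ and $K^\e$ are $\e$-independent, the optimal feedback, and with it $\bar\mu^\e_t$, coincides for all $\e$ with the candidate limit, giving $(i)$ at once (equivalently, one obtains $(i)$ softly: the $u^\e$ are affine with equibounded, equicontinuous coefficients, hence precompact in $\mathcal U$). The only $\e$-dependence left is in the covariance, which by continuous dependence of the ODE $\dot C=(A+BK)C+C(A+BK)'+2\e I$ on the parameter satisfies $C^\e_t\to C_t$ uniformly on $[0,T]$. For $(ii)$, writing the solution of the linear SDE associated to \eqref{e:ppe} by the variation-of-constants formula shows $\mh^\e_t$ is the convolution of $\mh_t$ with a centered Gaussian of covariance $\Sigma_\e(t)=O(\e)$ uniformly in $t$; the synchronous coupling then yields $\W_2(\mh^\e_t,\mh_t)^2\le\Tr{\Sigma_\e(t)}\to0$ uniformly, i.e.\ convergence in $C([0,T];\Pn)$ (alternatively one invokes Lemma \ref{lem:2} for the fixed limit control). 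Finally $(iii)$ is immediate from the cost formula, the mean part being $\e$-independent and the fluctuation part converging because $C^\e\to C$ and the noise term $2\e\int_0^T\Tr{\Pi}\de s\to0$.

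The main obstacle I expect is the rigorous justification of the reduction to affine feedbacks together with the optimality of the limiting control for $\Pp$: for $\e=0$ the optimal field is only determined on $\supp\mh_t$ (which may be a proper subset of $\R^d$), so one must be careful that the $\e\to0$ limit of the uniquely-determined viscous feedbacks is an admissible, and genuinely optimal, control for the deterministic problem. A clean and robust way to close this gap, avoiding any uniqueness discussion, is the competitor argument: for any admissible $v$ in $\Pp$, optimality of $\uh^\e$ for $\ppe$ gives $J(\mu^\e[v],v)\ge J(\mh^\e,\uh^\e)$, and letting $\e\to0$ --- using Lemma \ref{lem:2} on the left and $(iii)$ on the right --- yields $J(\mu[v],v)\ge J(\mh,\uh)$, so the limit $(\mh,\uh)$ solves $\Pp$. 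The remaining routine points are the uniform-in-time Wasserstein estimate and verifying the moment ODEs are well posed on all of $[0,T]$ under \M.
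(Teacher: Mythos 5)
Your proposal is correct and it does establish all three claims, but it reaches the central structural fact --- that the optimal control of $\ppe$ is an affine-in-space feedback whose coefficients do not depend on $\e$ --- by a genuinely different route than the paper. The paper lifts $\ppe$ to a controlled SDE and invokes the Stochastic Pontryagin Maximum Principle: the forward--backward system \eqref{eq:FB-PMP} is decoupled by taking expectations (a finite-dimensional LQ problem for the mean, solved by a Riccati matrix $\Sigma$) and by the affine ansatz $Y^\e_t=PX^\e_t+p_t$ (a second Riccati equation), with the viscosity surviving only in $Z^\e_t=\sqrt{2\e}P$. You instead stay at the PDE level and perform a moment closure: the barycenter equation is $\e$-independent, the covariance equation sees the viscosity only through the $+2\e I$ forcing, and the reduction to affine feedbacks is justified by an $L^2(\mu_t)$-orthogonal projection (Jensen) argument; the two reduced problems are then solved by Riccati equations that coincide with the paper's (your $\Pi$ is the paper's $P$, your mean Riccati is $\Sigma$), so both routes output the same $\e$-independent control, and your closing competitor argument is exactly the paper's final step. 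What your route buys: it avoids the stochastic machinery entirely (L-derivatives, FBSDEs, the SPMP verification theorem), and your Gaussian-convolution/synchronous-coupling proof of $(ii)$ is quantitative, giving $W_2(\mh^\e_t,\mh_t)\le C\sqrt{\e}$ uniformly in $t$, whereas the paper's Lemma \ref{lem:2} is a soft compactness-plus-uniqueness argument with no rate (though Lemma \ref{lem:2} applies to arbitrary admissible controls, which is what the competitor step needs --- you correctly fall back on it there). What the paper's route buys is robustness: the SPMP argument does not hinge on the cost and dynamics depending on $\mu$ only through its first two moments, so it points toward non-LQ generalizations, while your reduction is special to this structure. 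Two small points in your write-up need repair, both routine: first, the projection step as phrased is circular --- the affine regression of $u$ is taken in $L^2(\mu_t)$ for the trajectory $\mu_t$ generated by $u$ itself, and it requires $C_t>0$ to be defined; one must then observe that the projected control generates a trajectory whose moment curves are the \emph{same} as those of $\mu_t$ (by uniqueness for the closed linear moment ODEs), and only then compare the two costs. Second, with the normalization $V(t,C)=\tfrac12\Tr{\Pi C}+c(t)$ the additive noise cost is $\e\int_0^T\Tr{\Pi(s)}\de s$ rather than $2\e\int_0^T\Tr{\Pi(s)}\de s$; this factor is immaterial since either way the contribution is $O(\e)$ and vanishes in the limit.
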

The proof of the Theorem is given in Section \ref{s-proof}.

\begin{remark} 
One usually looks for admissible controls $u\in L^1((0,T);L^1(\R^d;\de \mu_t))$ in $\Pp$ and $L^1((0,T);L^1(\R^d;\de \mu^\e_t))$ in $\ppe$. This is indeed the minimum requirement in order to give a (weak) meaning to equations \eqref{e:dP} and \eqref{e:ppe}. However, for our purposes it is not restrictive to search for a Lipschitz optimal control for the following reasons:
\begin{itemize}
    \item in $\Pp$, conditions \eqref{cond:qs}-\eqref{cond:qsT} imply that the coercivity condition of \cite{BR} is satisfied; this ensures that there exists at least one Lipschitz optimal control;
    \item we know {\em a priori} that even if in $\ppe$ we look for controls in the larger class $L^1((0,T);L^1(\R^d;\de \mu^\e_t))$, the optimum is Lipschitz. See \cite[Lemma 6.18]{CD} and the construction in Section IV.
\end{itemize}
Therefore, by restricting ourselves to Lipschitz controls, we have the advantage that equation \eqref{e:dP} is well-posed, see Theorem \ref{thm:well_pos_cont} below. Moreover, the functional spaces do not depend on the solutions $\mu_t,\mu^\e_t$  anymore.
\end{remark}

\section{Mean-field equations}

In this section, we recall the main tools that allow to study mean-field equations. The most relevant examples of such equations are the ones in which the vector field is given by a convolution, i.e. of the form
\begin{equation}\label{eq:vlasov}
     \partial_t \mu_t+\dive(V(\mu_t\star H)\,\mu_t)=0.
\end{equation}
Convolution indeed describes long-range interaction between particles. The mean-field limit of several fundamental agent-based models have been described and studied, such as the limits of bounded confidence models \cite{MFJQ} or Cucker-Smale \cite{HT}. More in general, we study the following Cauchy problem:
\begin{equation}\label{eq:cont}
    \begin{cases}
    \partial_t \mu_t+\dive(v[\mu_t](t,\cdot)\,\mu_t)=0,\\
    \mu_t|_{t=0}=\mu_0,
    \end{cases}
\end{equation}
where the vector field $v[\mu]:(0,T)\times\R^d\to\R^d$ and the initial probability measure $\mu_0\in\P$ are given. Observe that the vector field depends on the whole measure $\mu$ itself. It represents the fact that the dynamics in a point of the density actually depends on the density elsewhere, due to long-range interactions. The presence of such phenomenon, known as non-locality, requires to develop a specific theory to ensure well-posedness of \eqref{eq:cont}.

\subsection{Well-posedness of the deterministic Cauchy problem}

In this section, we study well-posedness of \eqref{eq:cont}. Such theory is based on Wasserstein distance, that we recall here.
\begin{definition}
Let $\mu,\nu\in \Pn$. We say that $\gamma\in\mathcal{P}(R^{2d})$ is a transport plan between $\mu$ and $\nu$ – denoted by $\gamma \in \Pi(\mu,\nu)$ – provided that $\gamma(A \times \R^d) = \mu(A)$ and $\gamma(\R^d \times B) = \nu(B)$ for any pair of Borel sets $A,B \subset \R^d$. Given two measures $\mu,\nu\in \Pn$, the Wasserstein distance between $\mu$ and $\nu$ is given by
\begin{equation*}
W_2^2(\mu,\nu)=\min_{\gamma\in \Pi(\mu,\nu)}\left\{ \int_{\R^d\times\R^d} |x-y|^2\,\gamma(\de x,\de y) \right\}.
\end{equation*}
\end{definition}
\begin{remark}
The choice of the Wasserstein distance $W_2$ is related to the fact that we deal with an optimal control problem with quadratic cost.
\end{remark}

We will consider the space $\Pn$ endowed with the  Wasserstein distance $W_2$ from now on. It is remarkable to observe that the Wasserstein distance metrizes the weak-$*$ topology of probability measures. More precisely, it holds:
\begin{equation}\label{conv_w}
W_2(\mu,\mu_n)\to 0 \Longleftrightarrow \begin{cases}
\hspace{1.4cm}\mu_n\weaktos\mu,\\
\displaystyle\int_{\R^d}|x|^2\mu_n(\de x)\to \int_{\R^d}|x|^2\mu(\de x).
\end{cases}
\end{equation}
We refer to \cite{villani} for an overview of Wasserstein spaces. 

We are now ready to state the precise meaning of weak solution for \eqref{eq:cont} and the corresponding result of existence and uniqueness, proved in \cite{PR}.
\begin{definition}
A weak solution of \eqref{eq:cont} is a probability measure $\mu_t\in C([0,T];\P)$ such that
\begin{equation*}
    \frac{\de}{\de t}\int_{\R^d} \varphi(x)\mu_t(\de x)=\int_{\R^d}v[\mu_t](t,x)\cdot\nabla\varphi(x)\mu_t(\de x),
\end{equation*}
for all test functions $\varphi\in C^\infty_c(\R^d)$.
\end{definition}

\begin{Theorem}\label{thm:well_pos_cont}
Let $v$ be uniformly Lipschitz with respect to the Wasserstein distance on $\Pn$ and the Euclidean distance in $\R^d$, i.e. there exists a constant $L$ such that
\begin{equation}\label{def:lip}
    |v[\mu](t,x)-v[\nu](t,y)|\leq L\left(W_2(\mu,\nu)+|x-y|\right),
\end{equation}
for all $t\in\R$, $\mu,\nu\in\Pn$, and $x,y\in\R^d$.
Then, for each $\mu_0\in\Pac$ there exists a unique solution to \eqref{eq:cont}.
\end{Theorem}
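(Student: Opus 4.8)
The plan is to recast the nonlinear problem \eqref{eq:cont} as a fixed-point problem for the flow generated by the \emph{frozen} velocity field, and to run a Banach fixed-point argument in the complete metric space $C([0,T];\Pn)$ endowed with the distance $d(\mu,\nu):=\sup_{t\in[0,T]}W_2(\mu_t,\nu_t)$. Given a curve $\mu\in C([0,T];\Pn)$, the map $(t,x)\mapsto v[\mu_t](t,x)$ is, by \eqref{def:lip}, Lipschitz in $x$ uniformly in $t$ and of linear growth, so the characteristic ODE $\dot X=v[\mu_t](t,X)$ admits a unique global flow $\Phi^\mu_t$ by Cauchy--Lipschitz. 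I then define the operator $\mathcal{T}(\mu)_t:=(\Phi^\mu_t)_{\#}\mu_0$. A standard change-of-variables computation shows that $\mathcal{T}(\mu)$ is a weak solution of the \emph{linear} continuity equation with velocity $v[\mu_t]$; hence a fixed point of $\mathcal{T}$ is exactly a weak solution of the nonlinear problem \eqref{eq:cont}, and conversely.

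First I would check that $\mathcal{T}$ is well defined and self-maps a suitable complete set. The linear growth $|v[\mu_t](t,x)|\leq C(1+|x|)$, obtained from \eqref{def:lip} by comparison with a fixed reference measure, together with Gronwall gives $|\Phi^\mu_t(x)|\leq e^{Ct}(1+|x|)$, so that the second moment of $(\Phi^\mu_t)_{\#}\mu_0$ stays bounded on $[0,T]$; continuity in $t$ in the $W_2$ sense follows from the Lipschitz-in-time bound on $t\mapsto\Phi^\mu_t(x)$ via the diagonal coupling $x\mapsto(\Phi^\mu_t(x),\Phi^\mu_s(x))$. Thus $\mathcal{T}:C([0,T];\Pn)\to C([0,T];\Pn)$.

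The core step is the contraction estimate. Given two curves $\mu,\nu$ with flows $\Phi^\mu,\Phi^\nu$, the map $x\mapsto(\Phi^\mu_t(x),\Phi^\nu_t(x))$ pushes $\mu_0$ to an admissible transport plan between $\mathcal{T}(\mu)_t$ and $\mathcal{T}(\nu)_t$, whence
\begin{equation*}
W_2^2(\mathcal{T}(\mu)_t,\mathcal{T}(\nu)_t)\leq\int_{\R^d}|\Phi^\mu_t(x)-\Phi^\nu_t(x)|^2\de\mu_0(x).
\end{equation*}
Writing each flow through its integral equation and invoking \eqref{def:lip}, the integrand $\delta_t(x):=|\Phi^\mu_t(x)-\Phi^\nu_t(x)|$ obeys $\delta_t\leq L\int_0^t(W_2(\mu_s,\nu_s)+\delta_s)\,\de s$, so Gronwall yields $\delta_t(x)\leq LTe^{LT}\,d(\mu,\nu)$ uniformly in $x$. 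Since $\mu_0$ is a probability measure, this gives $d(\mathcal{T}(\mu),\mathcal{T}(\nu))\leq LTe^{LT}\,d(\mu,\nu)$, a contraction once $T$ is small enough that $LTe^{LT}<1$. Banach's theorem then produces a unique fixed point on a short interval, and since the contraction constant depends only on $L$ and the interval length, I would iterate over a finite partition of $[0,T]$ to reach existence and uniqueness on the whole interval.

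The last point, and the one I expect to be the most delicate, is to upgrade the uniqueness from ``among Lagrangian solutions'' (fixed points of $\mathcal{T}$) to ``among all weak solutions''. For this I would invoke the superposition/representation principle for the linear continuity equation with Lipschitz velocity: any weak solution $\mu$ of \eqref{eq:cont} coincides with the push-forward of $\mu_0$ under the flow of its own velocity field $v[\mu_t]$, hence is automatically a fixed point of $\mathcal{T}$, and the contraction argument then forces it to be the unique one. Finally, the assumption $\mu_0\in\Pac$ guarantees the flow is a bi-Lipschitz map preserving absolute continuity, so the solution remains in $\Pac$ throughout, which is also what is needed in the sequel.
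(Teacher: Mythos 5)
Your argument is correct in substance, but note that the paper itself contains no proof of this theorem: it is stated as a result quoted from \cite{PR}, so the comparison is with that reference rather than with an in-paper argument. In \cite{PR}, existence is obtained constructively via a semi-discrete Lagrangian scheme: time is partitioned into small steps, the measure argument of $v$ is frozen at the beginning of each step, the measure is pushed forward along the flow of the resulting Lipschitz field, and the scheme is shown to converge in the sup-Wasserstein distance as the step size vanishes; uniqueness then follows from a Gronwall estimate in $W_2$. Your route --- Banach fixed point for $\mathcal{T}(\mu)_t=(\Phi^\mu_t)_{\#}\mu_0$ on $C([0,T];\Pn)$, contraction via the diagonal coupling $x\mapsto(\Phi^\mu_t(x),\Phi^\nu_t(x))$ and Gronwall, then iteration over subintervals --- rests on exactly the same two estimates (the push-forward coupling bound and the Lipschitz-in-$(x,\mu)$ Gronwall inequality), but packages them more efficiently if well-posedness is the only goal; what the scheme-based proof buys in exchange is the convergence of a numerical method, which is the actual point of \cite{PR}. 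You also correctly isolate the step both arguments need and that is easiest to overlook: uniqueness must hold among \emph{all} weak solutions, not merely among fixed points of $\mathcal{T}$, and this is precisely where the representation/superposition principle for the linear continuity equation with Lipschitz velocity enters.

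Two caveats, both inherited from the statement rather than flaws in your reasoning. First, your linear-growth bound $|v[\mu_t](t,x)|\le C(1+|x|)$ does not follow from \eqref{def:lip} alone: that condition only compares values at the same time $t$, so one additionally needs $t\mapsto v[\mu](t,x_0)$ to be measurable and locally bounded for some reference pair $(\mu,x_0)$ --- a Carath\'eodory-type hypothesis which is explicit in \cite{PR} (where $v$ is assumed uniformly bounded) and left implicit here; without it even the characteristic ODE need not be solvable, so you should state it. Second, make the uniqueness class explicit: the paper's definition of weak solution only requires $\mu\in C([0,T];\P)$, but for a curve leaving $\Pn$ the field $v[\mu_t]$ is not even defined, so your contraction argument proves uniqueness in $C([0,T];\Pn)$, which is how the theorem must be read.
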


The key consequence of this theorem for the study of $\Pp$ is that the exponential map, associating to each Lipschitz control $u$ the corresponding solution of \eqref{e:dP}, is well defined.

\subsection{Derivative with respect to a measure}

In this section, we choose the correct concept of derivative of a functional with respect to a measure. Since we deal with minimization of a functional $J$, it is useful to have a first-order condition with respect to perturbations of the state $\mu$.

We recall that there are several different concepts of derivatives with respect to measures, see e.g. \cite{CD}. For our problem, we need the so-called \emph{L-derivative}.
Let $(\Omega,\mathcal{F},\mathbb{P})$ be an atomless probability space. Given a map $h:\Pn\to\R$ we define the {\em lifting}
$$
\tilde{h}(X)=h(\mathcal{L}(X)),\hspace{0.3cm}\forall X\in L^2(\Omega;\R^d),
$$
where $\mathcal{L}(X):=X\#\mathbb{P}$. Note that $\mathcal{L}(X)\in\Pn$, since $X\in L^2(\Omega;\R^d)$.
\begin{definition}
A function $h:\Pn\to\R$ is said to be {\em L-differentiable} at $\mu_0\in\Pn$ if there exists a random variable $X_0$ with law $\mu_0$ such that the lifted function $\tilde{h}$ is Fr\'echet differentiable at $X_0$.
\end{definition}
The Fr\'echet derivative of $\tilde{h}$ at $X$ can be viewed as an element of $L^2(\Omega;\R^d)$ and we can denote it by $D\tilde{h}(X)$. It is important that the differentiability of $h$ does not depend upon the particular choice of $X$.
\begin{proposition}
Let $h:\Pn\to\R$ and $\tilde{h}$ its extension. Let $X,X'\in L^2(\Omega;\R^d)$ with the same law $\mu$. If $\tilde{h}$ is Fr\'echet differentiable at $X$, then $\tilde{h}$ is Fr\'echet differentiable at $X'$ and $(X,D\tilde{h}(X))$ has the same law as $(X',D\tilde{h}(X'))$.
\end{proposition}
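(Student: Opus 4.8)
The plan is to exploit the one structural feature of the lifted functional: $\tilde{h}(X)=h(\L(X))$ depends on $X$ only through its law, so it is invariant under reparametrisations of the underlying space that preserve $\mathbb{P}$. Concretely, if $\tau:\Omega\to\Omega$ is a bijection with measurable inverse and $\tau\#\mathbb{P}=\mathbb{P}$, then $\L(Y\circ\tau)=\L(Y)$ for every $Y\in L^2(\Omega;\R^d)$, whence $\tilde{h}(Y\circ\tau)=\tilde{h}(Y)$. I would first settle the case in which $X'$ is exactly such a rearrangement of $X$, i.e. $X'=X\circ\tau$, and then reduce the general case to it by approximation; the atomless hypothesis on $(\Omega,\mathcal{F},\mathbb{P})$ enters precisely in this reduction.

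For the rearrangement case, write a perturbation of $X'$ as $X'+W=(X+W\circ\tau^{-1})\circ\tau$ and use invariance to obtain $\tilde{h}(X'+W)=\tilde{h}(X+W\circ\tau^{-1})$. Expanding the right-hand side by Fr\'echet differentiability of $\tilde{h}$ at $X$, with $\xi:=D\tilde{h}(X)$, gives $\tilde{h}(X+W\circ\tau^{-1})=\tilde{h}(X)+\E[\xi\cdot(W\circ\tau^{-1})]+o(\|W\circ\tau^{-1}\|_{L^2})$. Since $\tau^{-1}$ preserves $\mathbb{P}$, one has $\|W\circ\tau^{-1}\|_{L^2}=\|W\|_{L^2}$, and the change of variables $\omega=\tau(\eta)$ yields $\E[\xi\cdot(W\circ\tau^{-1})]=\E[(\xi\circ\tau)\cdot W]$. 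As also $\tilde{h}(X)=\tilde{h}(X')$, this shows that $\tilde{h}$ is Fr\'echet differentiable at $X'$ with $D\tilde{h}(X')=\xi\circ\tau$. Consequently $(X',D\tilde{h}(X'))=(X,\xi)\circ\tau$, and because $\tau$ preserves $\mathbb{P}$, this random vector has the same law as $(X,D\tilde{h}(X))$, which is the assertion in this case.

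It remains to pass from rearrangements to an arbitrary $X'$ with $\L(X')=\L(X)=\mu$, and this is the step I expect to be the main obstacle. Here I would invoke that $(\Omega,\mathcal{F},\mathbb{P})$ is atomless, which supplies enough $\mathbb{P}$-preserving bijections so that the orbit $\{X\circ\tau:\tau\ \mathbb{P}\text{-preserving bijection}\}$ is $L^2$-dense in the set of random variables of law $\mu$. Choosing $\tau_n$ with $X_n:=X\circ\tau_n\to X'$ in $L^2$, the previous step gives differentiability at each $X_n$ with $D\tilde{h}(X_n)=\xi\circ\tau_n$ and $(X_n,\xi\circ\tau_n)\stackrel{\mathrm{law}}{=}(X,\xi)$. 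The delicate point is that Fr\'echet differentiability at the single point $X$ does not automatically propagate to the limit point $X'$: one cannot merely differentiate along the sequence. I would control this by noting the uniform bound $\|\xi\circ\tau_n\|_{L^2}=\|\xi\|_{L^2}$, extracting a weak $L^2$ limit $\eta$ of a subsequence, and then verifying that $\eta$ is the Fr\'echet derivative at $X'$ by comparing the expansions of $\tilde{h}$ at $X_n$ and at $X'$, using the uniform smallness of the remainders together with the continuity of $\tilde{h}$ inherited from continuity of $h$ on $(\Pn,W_2)$; the joint-law identity then passes to the limit along the couplings $(X_n,\xi\circ\tau_n)$. Since this is a known fact in the theory of the L-derivative, one may alternatively invoke \cite{CD} directly.
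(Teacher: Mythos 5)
A preliminary remark on the comparison you asked for: the paper itself offers \emph{no} proof of this proposition; it is quoted as background on the L-derivative and implicitly referred to \cite{CD}. So the benchmark is the standard Lions/Carmona--Delarue argument, and your outline follows exactly that architecture: first the case of a measure-preserving rearrangement $X'=X\circ\tau$, then approximation of an arbitrary $X'$ of the same law by rearrangements of $X$. Your first step is correct and complete: the identity $\tilde{h}(X'+W)=\tilde{h}(X+W\circ\tau^{-1})$, the isometry $\|W\circ\tau^{-1}\|_{L^2}=\|W\|_{L^2}$, and the change of variables $\E[\xi\cdot(W\circ\tau^{-1})]=\E[(\xi\circ\tau)\cdot W]$ do give Fr\'echet differentiability at $X'$ with $D\tilde{h}(X')=\xi\circ\tau$, and the joint-law claim is immediate there. (One hypothesis-level caveat: the $L^2$-density of the orbit $\{X\circ\tau\}$ in the set of variables of law $\mu$ requires the atomless space to be standard/Polish, as assumed in \cite{CD}; atomlessness of an abstract probability space does not by itself supply enough measure-preserving bijections.)

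The genuine gap is in the limit passage, in two places. First, the ``uniform smallness of the remainders'' you invoke is true but never justified, and it is not automatic; the reason it holds is specific to your construction: writing $R$ for the remainder at $X$, the remainder at $X_n:=X\circ\tau_n$ is exactly $R_n(W)=R(W\circ\tau_n^{-1})$, and since composition with $\tau_n^{-1}$ is an isometry of $L^2$, all the $R_n$ share the single modulus $\rho(\delta)=\sup_{0<\|V\|_{L^2}\le\delta}|R(V)|/\|V\|_{L^2}$. (This also makes your appeal to ``continuity of $\tilde{h}$ inherited from continuity of $h$'' both unnecessary and illegitimate: continuity of $h$ on $(\Pn,W_2)$ is not a hypothesis of the proposition; what you need follows from the uniform expansion and from $\tilde{h}(X_n)=\tilde{h}(X')=h(\mu)$.) Second, and more seriously, your final claim that ``the joint-law identity then passes to the limit along the couplings $(X_n,\xi\circ\tau_n)$'' is false if $\xi\circ\tau_n\weakto\eta$ only weakly in $L^2$, which is all your compactness argument provides: weak limits do not preserve laws. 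On $\Omega=[0,1]$, a Rademacher-type sequence has constant law $\frac12(\delta_{-1}+\delta_{1})$ yet converges weakly to $0$. The fix stays inside your framework: once differentiability at $X'$ with derivative $\eta$ is established, test the expansions at $X_n$ and at $X'$ against $W=t(\xi_n-\eta)/\|\xi_n-\eta\|_{L^2}$ and use $\tilde{h}(X_n)=\tilde{h}(X')$ together with the common modulus $\rho$ to obtain, for $n$ large, $\|\xi_n-\eta\|_{L^2}\le \|\xi\|_{L^2}\|X'-X_n\|_{L^2}/t+2\rho(2t)+\rho'(t)$; letting $n\to\infty$ and then $t\to 0^+$ gives $\xi\circ\tau_n\to\eta$ \emph{strongly} in $L^2$, and strong convergence of the pairs $(X_n,\xi\circ\tau_n)$, whose law is constantly that of $(X,\xi)$, does transport the joint law to $(X',\eta)$. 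Alternatively, one can first prove the representation $\xi=\phi(X)$ for a deterministic measurable $\phi$ (the next proposition in the paper) and upgrade weak to strong convergence through $\phi$; either way, some such upgrade is indispensable, and it is precisely the content hidden in the citation of \cite{CD} that you fall back on.
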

\begin{proposition}
Let $h:\Pn\to\R^d$ be L-differentiable than for any $\mu_0\in\Pn$ there exists a measurable function $\xi:\R^d\to\R^d$ such that for all $X\in L^2(\Omega;\R^d)$ with law $\mu_0$, it holds that $D\tilde{h}(X)=\xi(X)$ $\mu_0$-almost surely.
\end{proposition}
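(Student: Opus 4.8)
The plan is to exploit the invariance of the lifted functional $\tilde h$ under measure-preserving rearrangements of the probability space, and to combine it with the preceding proposition (which guarantees that $\tilde h$ is Fr\'echet differentiable at \emph{every} $X'$ sharing the law of the base point). Since the Fr\'echet derivative of a vector-valued map is computed componentwise, it suffices to treat each real-valued component, so I assume $h:\Pn\to\R$ and regard $D\tilde h(X)$ as its Riesz representative in $L^2(\Omega;\R^d)$.

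First I would record the basic invariance: if $\tau:\Omega\to\Omega$ is an invertible bi-measurable transformation with $\tau\#\bP=\bP$, then $\mathcal{L}(Y\circ\tau)=\mathcal{L}(Y)$ for every $Y\in L^2(\Omega;\R^d)$, whence $\tilde h(Y\circ\tau)=h(\mathcal{L}(Y\circ\tau))=h(\mathcal{L}(Y))=\tilde h(Y)$. Because precomposition $Y\mapsto Y\circ\tau$ is a surjective linear isometry of $L^2(\Omega;\R^d)$, differentiating this identity (using the preceding proposition to know $\tilde h$ is differentiable at $X\circ\tau$, which still has law $\mu_0$) yields the equivariance
\begin{equation*}
D\tilde h(X\circ\tau)=\big(D\tilde h(X)\big)\circ\tau \qquad \bP\text{-a.s.}
\end{equation*}
Indeed, for any direction $W\in L^2(\Omega;\R^d)$ one computes $\langle D\tilde h(X\circ\tau),W\rangle=\frac{d}{ds}\big|_{s=0}\tilde h(X\circ\tau+sW)=\frac{d}{ds}\big|_{s=0}\tilde h(X+sW\circ\tau^{-1})=\langle D\tilde h(X),W\circ\tau^{-1}\rangle=\langle \big(D\tilde h(X)\big)\circ\tau,W\rangle$, where the second equality writes $X\circ\tau+sW=(X+sW\circ\tau^{-1})\circ\tau$ and invokes invariance, and the last uses that $\tau$ preserves $\bP$.

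Next I would specialize to the subgroup of such $\tau$ that fix $X$, i.e.\ $X\circ\tau=X$ a.s. For these the equivariance collapses to $D\tilde h(X)=\big(D\tilde h(X)\big)\circ\tau$, so $D\tilde h(X)$ is invariant under every $\bP$-preserving invertible map leaving $X$ unchanged. The argument then concludes via the measure-theoretic fact — here the atomlessness of $(\Omega,\F,\bP)$ is essential — that a random variable invariant under all $\bP$-preserving transformations fixing $X$ must be $\sigma(X)$-measurable modulo null sets: disintegrating $\bP$ over the law of $X$, each conditional measure is non-atomic and the rearrangements act ergodically on it, forcing $D\tilde h(X)$ to be conditionally constant given $X$. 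The Doob--Dynkin lemma then supplies a Borel map $\xi:\R^d\to\R^d$ with $D\tilde h(X)=\xi(X)$ $\bP$-a.s.; finally, $\xi$ is determined $\mu_0$-a.e.\ and its independence of the chosen representative $X$ is exactly the content of the preceding proposition.

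The main obstacle I expect is this last measure-theoretic step: justifying that invariance under fiber-preserving rearrangements forces $\sigma(X)$-measurability genuinely uses the atomless hypothesis and a careful disintegration/ergodicity argument (equivalently, an approximation argument producing, for any non-$\sigma(X)$-measurable event, an explicit $\bP$-preserving map fixing $X$ that moves $D\tilde h(X)$). By contrast, the equivariance computation is routine once Fr\'echet differentiability at $X\circ\tau$ is secured, which is precisely why the preceding proposition is invoked.
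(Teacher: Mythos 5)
Your first two steps (invariance of $\tilde h$ under measure-preserving maps and the resulting equivariance $D\tilde h(X\circ\tau)=(D\tilde h(X))\circ\tau$) are correct, and they are the same opening moves as the standard argument; note that the paper itself does not prove this proposition but recalls it from \cite{CD}, so that is the proof to compare against. The genuine gap is precisely the measure-theoretic step you flag as the main obstacle: for a \emph{general} representative $X$ it is false that atomlessness of $(\Omega,\F,\bP)$ makes the conditional measures of $\bP$ given $X$ atomless, and false that invariance under all $\bP$-preserving maps fixing $X$ forces $\sigma(X)$-measurability. Concretely, take $\Omega=[0,1]$ with Lebesgue measure and
\begin{equation*}
X(\omega)=3\omega \ \text{ for }\omega\in[0,\tfrac13],\qquad X(\omega)=\tfrac32\bigl(\omega-\tfrac13\bigr)\ \text{ for }\omega\in(\tfrac13,1].
\end{equation*}
Then $X$ is uniform on $[0,1]$, each fiber $\{X=x\}$ consists of two points carrying conditional masses $\tfrac13$ and $\tfrac23$, and any $\bP$-preserving $\tau$ with $X\circ\tau=X$ a.s.\ must be the identity a.e.: on the set $S$ of fibers where $\tau$ swaps the two points, the two branch preimages have masses $\tfrac13|S|$ and $\tfrac23|S|$ and would be exchanged, contradicting measure preservation unless $|S|=0$. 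For this $X$ the invariance hypothesis is vacuous, while e.g.\ $\mathbbm{1}_{[0,1/3]}$ is invariant but not $\sigma(X)$-measurable, so the disintegration/ergodicity argument cannot start from an arbitrary representative.

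The missing idea is to invoke the preceding proposition a second time, not only for differentiability at $X\circ\tau$, but to reduce the claim to one well-chosen representative. The conclusion ``$D\tilde h(X)=\xi(X)$ a.s.\ for some Borel $\xi$'' says exactly that the joint law of $(X,D\tilde h(X))$ is carried by a graph, and by the preceding proposition this joint law is the same for every $X$ with law $\mu_0$; hence it suffices to prove the statement for a single representative. Choosing (via the isomorphism of atomless standard spaces with $[0,1)$, as in \cite{CD}) a representative $X$ together with an \emph{independent} uniform variable $U$, the conditional laws given $X$ are atomless, the family of $\bP$-preserving maps fixing $X$ and rearranging the $U$-coordinate is rich, and its invariant $\sigma$-algebra is $\sigma(X)$ modulo null sets; your ergodicity argument and the Doob--Dynkin lemma then apply, and the graph property transfers back to every other representative through the equality of joint laws. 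With this reduction inserted your proposal follows the proof of \cite{CD}; without it, the key step fails.
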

With this notations, the equivalence class of $\xi\in L^2(\R^d,\mu_0;\R^d)$ is uniquely defined and we denote it by $\partial_\mu h(\mu_0)$. We call {\em L-derivative} of $h$ at $\mu_0$ the function
$$
\partial_\mu h(\mu_0)(\cdot):x\in\R^d\mapsto\partial_\mu h(\mu_0)(x).
$$
Finally, a function $h:\Pn\to\R^d$ is said to be {\em L-convex} if it is L-differentiable and
$$
h(\mu)-h(\mu')-\E[\partial_\mu h(\mu_0)(X)\cdot(X-X')]\geq 0,
$$
whenever $X,X'\in L^2(\Omega;\R^d)$ with law, respectively, $\mu,\mu'$.

\section{The viscous LQ problem $\ppe$} \label{s-ppe}

In this section, we solve the viscous problem $\ppe$. We prove existence and uniqueness of the optimal control and provide an explicit expression by a cascade of two Riccati equations. This section is mostly based on \cite{B,BFY,CD}.

The main idea to solve $\ppe$ is to use stochastic control to find the optimal pair $(\mu^\e_t,u^\e)$ for it. Indeed, we may think of $\mu^\e$ as the law of a stochastic process $X^\e$ which solves the stochastic differential equation 
\begin{equation}\label{eq:SDE}
\begin{cases}
\de X^\e_t=b(t,X^\e_t,\mu^\e_t,\alpha)\de t+\sqrt{2\e}\de W_t,\\
X^\e_t|_{t=0}=X_0.
\end{cases}
\end{equation}
Here $W_t$ is a standard Brownian motion, $\alpha$ is the control, $X_0$ is a random variable independent from $W_t$ with law $\mu_0$, and the equation has to be understood in the It\^o sense. With these notations the cost functional can be rewritten as
\begin{equation*}
    J(\alpha)=\E\left[ \int_0^T f(t,X^\e_t,\mu^\e_t,\alpha_t)\de t +g(X^\e_T,\mu^\e_T) \right],
\end{equation*}
where the control $\alpha_t$ is a measurable process with values in $\R^d$. It moreover satisfies $$
\E\left[ \int_0^T|\alpha_t|^2\de t\right]<\infty.
$$

When the data of the optimization problem are smooth enough, solving $\ppe$ and applying the stochastic approach  just described are equivalent. This is the case of the present LQ setting; we refer to \cite{CD,LP} for the precise description of the two approaches and the connection between them. In what follows, we solve the LQ mean-field optimal control problem by using the stochastic approach and then we recover the solution of $\ppe$  to prove the vanishing viscosity method.

\subsection{The adjoint variable and stochastic maximum principle}

In this section we define the adjoint process of a controlled state $X_t$ and we recall the Stochastic Pontryagin Maximum Principle (SPMP) for optimality. Since we will give general definitions, in this subsection we drop the superscript $\e$ for simplicity. We will use the notation $(\tilde{\Omega},\tF,\tP)$ for a copy of $(\Omega,\F,\bP)$ and $\tE$ for the expectation under $\tP$.

First, we define the Hamiltonian $H$ as
\begin{equation}\label{def:H}
H(t,x,\mu,y,\alpha)=b(t,x,\mu,\alpha)\cdot y+f(t,x,\mu,\alpha),
\end{equation}
for $(t,x,\mu,y,\alpha)\in [0,T]\times\R^d\times\Pn\times\R^d\times\R^d$.
\begin{definition}
We call an {\em adjoint processes} of $X^\e_t$ any couple $(Y_t,Z_t)$ satisfying the equation
\begin{equation}\label{eq:adj_st}
    \begin{cases}
    -\de Y_t=(\nabla H(t,X_t,\mu_t,Y_t,Z_t,\alpha_t)+\tilde{\E}[\partial_\mu H(t,\tilde{X}_t,\mu_t,\tilde{Y}_t,\tilde{Z}_t,\tilde{\alpha}_t)(X_t)])\de t+Z_t\de W_t,\\
    Y_T=\nabla g(X_T,\mu_T)+ \tilde{\E}[\partial_\mu g(\tilde{X}_T,\mu_T)(X_T)],
    \end{cases}
\end{equation}
where $(\tilde{X}_t,\tilde{Y}_t,\tilde{Z}_t,\tilde{\alpha}_t)$ is an independent copy of $(X_t,Y_t,Z_t,\alpha_t)$ defined on the space $(\tilde{\Omega},\tilde{\mathcal{F}},\tilde{\mathbb{P}})$.
\end{definition}
We have the following sufficient condition for optimality associated to the SPMP, see \cite{CD}.
\begin{Theorem}\label{teo:ott-st}
Let $b,f,g$ be Linear-Quadratic, i.e. \eqref{def:b}-\eqref{def:f}-\eqref{def:g} hold. Let $\alpha_t$ be an admissible control, $X_t$ the corresponding controlled state process,
and $(Y_t,Z_t)$ the corresponding adjoint processes. Assume that:
\begin{itemize}
    \item $(x,\mu)\mapsto g(x,\mu)$ is convex,
    \item $(x,\mu,\alpha)\mapsto H(t,x,\mu,Y_t,Z_t,\alpha)$ is convex.
\end{itemize}
If it holds $\mathcal{L}^1\otimes\mathbb{P}$-a.e. that
\begin{equation*}
    H(t,X_t,\mu_t,Y_t,Z_t,\alpha^*)=\inf_{\alpha} H(t,X_t,\mu_t,Y_t,Z_t,\alpha),
\end{equation*}
then $\alpha^*$ is an optimal control. Moreover, if $H$ is strictly convex in $\alpha$, then the optimal control is also unique.
\end{Theorem}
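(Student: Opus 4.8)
The plan is to establish the sufficiency part of the Pontryagin principle by the classical convexity (verification) argument, adapted to the mean-field setting. Let $\alpha^*$ be the candidate satisfying the stated minimization property, with controlled state $X^*_t$, adjoint pair $(Y^*_t,Z^*_t)$, and law $\mu^*_t=\mathcal{L}(X^*_t)$; let $\alpha$ be any admissible control with state $X_t$ and law $\mu_t$. I abbreviate $b_t:=b(t,X_t,\mu_t,\alpha_t)$, $b^*_t:=b(t,X^*_t,\mu^*_t,\alpha^*_t)$, and $H_t:=H(t,X_t,\mu_t,Y^*_t,\alpha_t)$, $H^*_t:=H(t,X^*_t,\mu^*_t,Y^*_t,\alpha^*_t)$ (the noise in \eqref{eq:SDE} being additive, $H$ does not depend on $z$, so the $Z$-variable is inert beyond serving as the martingale integrand). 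The goal is $J(\alpha)-J(\alpha^*)\geq 0$ for the expectation form of the cost. First I would record the two convexity inequalities in lifted (McKean--Vlasov) form: joint convexity of $g$ in $(x,\mu)$ gives $\E[g(X_T,\mu_T)-g(X^*_T,\mu^*_T)]\geq \E[Y^*_T\cdot(X_T-X^*_T)]$, since the terminal datum in \eqref{eq:adj_st} is exactly the L-gradient of $X\mapsto\E[g(X,\mathcal{L}(X))]$ at $X^*_T$; joint convexity of $H$ in $(x,\mu,\alpha)$ gives, pointwise in $t$, $\E[H_t-H^*_t]\geq \E[\nabla_x H^*_t\cdot(X_t-X^*_t)]+(\text{measure term})+\E[\nabla_\alpha H^*_t\cdot(\alpha_t-\alpha^*_t)]$.

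The central computation is an It\^o expansion of $t\mapsto Y^*_t\cdot(X_t-X^*_t)$. Since both states solve \eqref{eq:SDE} with the same additive noise $\sqrt{2\e}\,\de W_t$ and the same initial datum $X_0$, the difference $X_t-X^*_t$ has no martingale part and vanishes at $t=0$; hence the quadratic covariation term drops, the boundary contribution at $0$ is zero, and $\int_0^T Z^*_t\cdot(X_t-X^*_t)\,\de W_t$ has zero expectation by the $L^2$ bounds guaranteed in the admissible class. Taking expectations then yields $\E[Y^*_T\cdot(X_T-X^*_T)]=\E\big[\int_0^T\big(Y^*_t\cdot(b_t-b^*_t)-(\nabla_x H^*_t+\tE[\partial_\mu H^*_t(X^*_t)])\cdot(X_t-X^*_t)\big)\,\de t\big]$, where the gradient terms come directly from the drift of \eqref{eq:adj_st}.

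I would then assemble the pieces. Writing $f=H-b\cdot y$ from \eqref{def:H} and evaluating the running Hamiltonian along both trajectories at the common adjoint $Y^*_t$, the terms $Y^*_t\cdot(b_t-b^*_t)$ cancel exactly between the cost difference and the It\^o identity, leaving $J(\alpha)-J(\alpha^*)\geq \E[\int_0^T (H_t-H^*_t)\,\de t]-\E[\int_0^T (\nabla_x H^*_t+\tE[\partial_\mu H^*_t(X^*_t)])\cdot(X_t-X^*_t)\,\de t]$. Inserting the convexity lower bound for $H_t-H^*_t$ collapses this to $\E[\int_0^T \nabla_\alpha H^*_t\cdot(\alpha_t-\alpha^*_t)\,\de t]$; since $\alpha^*_t$ minimizes the convex, coercive (because $R>0$) map $\alpha\mapsto H(t,X^*_t,\mu^*_t,Y^*_t,\alpha)$ over all of $\R^d$, its interior first-order condition $\nabla_\alpha H^*_t=0$ makes this term vanish, whence $J(\alpha)\geq J(\alpha^*)$. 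For uniqueness, strict convexity in $\alpha$ upgrades the convexity inequality to a quadratic gap of the form $c\,\E[\int_0^T|\alpha_t-\alpha^*_t|^2\,\de t]$ with $c>0$; if a second control attained the same cost, every inequality would be an equality, forcing $\alpha_t=\alpha^*_t$ for $\mathcal{L}^1\otimes\mathbb{P}$-a.e. $(t,\omega)$.

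The step I expect to be the main obstacle is the correct bookkeeping of the mean-field terms, namely matching the measure-derivative contribution $\tE[\partial_\mu H^*_t(X^*_t)]$ in \eqref{eq:adj_st} with the $\partial_\mu$ term produced by L-convexity of $H$ (and likewise for $g$ at the terminal time). This requires the standard symmetrization via the independent copy $(\tilde X,\tilde Y,\tilde Z,\tilde\alpha)$ on $(\tilde\Omega,\tF,\tP)$: a Fubini exchange gives $\E[\tE[\partial_\mu g(\tilde X^*_T,\mu^*_T)(X^*_T)]\cdot(X_T-X^*_T)]=\E[\tE[\partial_\mu g(X^*_T,\mu^*_T)(\tilde X^*_T)\cdot(\tilde X_T-\tilde X^*_T)]]$, which is precisely the term appearing in the lifted convexity inequality. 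Keeping the two copies consistent throughout, and verifying the $L^2$ integrability that justifies discarding the martingale expectation, is the delicate though by now classical part of the argument.
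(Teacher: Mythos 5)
Your proposal is correct: it is the classical convexity/verification proof of the sufficiency part of the stochastic Pontryagin principle for mean-field dynamics, with the key points handled properly (the additive common noise making $X_t-X^*_t$ of finite variation so the covariation term vanishes, the cancellation of $Y^*_t\cdot(b_t-b^*_t)$ via $f=H-b\cdot y$, the Fubini symmetrization matching the $\partial_\mu$ terms of the BSDE with those from L-convexity, and the quadratic gap from $R>0$ for uniqueness). Note, however, that the paper itself does not prove this theorem at all: it quotes it from Carmona--Delarue \cite{CD}, and your argument is essentially the proof given in that reference (Vol.~I, the sufficiency form of the SPMP), so there is nothing to compare beyond observing that you have reconstructed the standard cited argument correctly.
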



The convexity in the measure variable in the Theorem above is intended as L-convexity. By assuming $H$ and $g$ defined as \eqref{def:H} and \eqref{def:g}, the hypothesis of Theorem \ref{teo:ott-st} are satisfied and we can compute explicitly the unique optimal control. This is the content of the next section.

\subsection{Computation of the optimal control}

In this section, we explicitly find the optimal control $\hat{u}^\e$ of $\ppe$, showing moreover that it does not depend on the parameter $\e$. With this goal, we first solve the forward-backward system of equations given by the SPMP. Then we reconstruct the optimal control of $\ppe$.

By Theorem \ref{teo:ott-st}, the optimal control is given by the minimizer (in the control variable) of the Hamiltonian, i.e.
\begin{equation*}
    \hat{\alpha}=\hat{\alpha}(t,x,\mu,y)=-R^{-1}B'y,
\end{equation*}
while the L-derivative of $H$ is
\begin{equation*}
    \partial_\mu H(t,x,\mu,y)(x')=\bar{A}'y-S'\bar{Q}(x-S\bar{\mu}).
\end{equation*}
Then, by plugging $\hat{\alpha}$ in \eqref{eq:SDE} and \eqref{eq:adj_st}, we obtain the following forward-backward system:
\begin{equation}\label{eq:FB-PMP}
    \begin{cases}
    \de X^\e_t= \left(AX^\e_t-BR^{-1}B'Y^\e_t+\bar{A}\E[X^\e_t]\right)\de t+\sqrt{2\e}\de W_t,\\
    X^\e_t|_{t=0}= X_0,\\
    -\de Y^\e_t=\left( A'Y^\e_t+(Q+\bar{Q})X^\e_t-\bar{Q}S\E[X^\e_t]\right)\de t+\left(\bar{A}'\E[Y^\e_t]-S'\bar{Q}(I-S)\E[X^\e_t]\right)\de t +Z^\e_t\de W_t,\\
    Y^\e_T= (Q+\bar{Q})X^\e_T+(S'_T\bar{Q}_TS_T-S'_T\bar{Q}_T-\bar{Q}_TS_T)\E[X^\e_T].
    \end{cases}
\end{equation}
By taking the expectations in \eqref{eq:FB-PMP}, we have that $\bar{x}^\e_t:=\E[X^\e_t],\bar{y}^\e_t:=\E[Y^\e_t]$ satisfies the system
\begin{equation}\label{eq:FB-E}
    \begin{cases}
    \dot{\bar{x}}^\e_t= (A+\bar{A})\bar{x}^\e_t-BR^{-1}B'\bar{y}^\e_t,\\
    \bar{x}^\e_0= \bar{\mu}_0,\\
    -\dot{\bar{y}}^\e_t= (Q+(I-S)'\bar{Q}(I-S))\bar{x}^\e_t + (A+\bar{A}')\bar{y}^\e_t,\\
    \bar{y}^\e_T=(Q_T+(I-S_T)'\bar{Q}_T(I-S_T))\bar{x}^\e_T.\\
    \end{cases}
\end{equation}
The system \eqref{eq:FB-E} can be associated to the following finite-dimensional optimal control problem
$$
\begin{aligned}
    \min_{w}&\left( \frac{1}{2}\int_0^T \left(\chi'M\chi+w'Rw \right) \de t+\frac{1}{2}\chi(T)'M_T\chi(T)\right),\\
&\begin{cases}
\dot{\chi}(t)=(A+\bar{A})\chi(t)+Bw(t),\\
\chi(0)=\bar{\mu}_0,
\end{cases}
\end{aligned}
$$
where $M=Q+(I-S)'\bar{Q}(I-S)$ and $M_T=Q_T+(I-S_T)'\bar{Q_T}(I-S_T)$. Because of the assumptions \eqref{cond:qs}, \eqref{cond:qsT}, the problem above is a finite dimensional LQ control problem, where $\bar{x}^\e_t$ is the optimal trajectory and $\bar{y}_t^\e$ the associated co-state. To solve this problem, it is enough to find the unique solution $\Sigma$ of the Riccati equation
\begin{equation*}
\begin{cases}
\dot{\Sigma}+\Sigma (A+\bar{A})+(A+\bar{A})' \Sigma-\Sigma BR^{-1}B'\Sigma+Q+(I-S)'\bar{Q}(I-S)=0,\\
\Sigma(T)=Q_T+(I-S_T)'\bar{Q}_T(I-S_T),
\end{cases}
\end{equation*}
which is symmetric positive definite. It then holds
\begin{equation}\label{def:EY}
    \bar{y}^\e_t=\Sigma\bar{x}^\e_t,
\end{equation}
and substituting in \eqref{eq:FB-E} we get 
\begin{equation}\label{eq:EX}
    \begin{cases}
    \dot{\bar{x}}^\e_t= (A+\bar{A}-BR^{-1}B'\Sigma)\bar{x}^\e_t,\\
    \bar{x}^\e_0= \bar{\mu}_0.
    \end{cases}
\end{equation}
Since \eqref{eq:EX} is a linear ODE with smooth coefficients, it admits a unique solution $\bar{x}_t$, which therefore does not depend on $\e$. The same holds for $\bar{y}_t=\bar{y}^\e_t$ as a consequence of \eqref{def:EY}. Then, once $\E[X^\e_t]$ and $\E[Y^\e_t]$ are replaced with $\bar x_t, \bar y_t$, the system \eqref{eq:FB-PMP} can be associated to a standard stochastic control problem with strictly convex Hamiltonian, hence it admits a unique solution, see \cite{CD}. The affine structure of \eqref{eq:FB-PMP} suggests to write $Y^\e_t=P^\e X^\e_t + p^\e_t$, where
\begin{equation}\label{eq:Pp}
    \begin{cases}
    \dot{P}^\e+A'P^\e+P^\e A-P^\e BR^{-1}B'P^\e+Q+\bar{Q}=0,\\
    P^\e(T)=Q_T+\bar{Q}_T,\\
    \dot{p}^\e_t+(A'-P^\e BR^{-1}B')p^\e_t+(\bar{A}'\Sigma+P^\e\bar{A}+S'\bar{Q}S-S'\bar{Q}-\bar{Q}S) \bar{x}_t=0,\\
    p^\e(T)=(S_T'\bar{Q}_TS_T-S_T'\bar{Q}_T-\bar{Q}_TS_T)\bar{x}_t,\\
    Z^\e_t=\sqrt{2\e} P^\e_t.
    \end{cases}
\end{equation}
Arguing as before, the first equation in \eqref{eq:Pp} is of Riccati-type, then it admits a unique solution $P^\e$. Hence, uniqueness of the solution implies that the matrix $P^\e$ does not depend on $\e$. Therefore, once we have the matrix $P$, the equation for $p^\e$ is a linear ODE and again by uniqueness it admits a unique solution, which then does not depend on $\e$. On balance, $Z^\e_t$ is the only term which depends on the viscosity.\\
We can now write the optimal control $\hat{\alpha}$ as
\begin{equation*}
\hat{\alpha}_t=\hat{u}(t,X^\e_t)=-R^{-1}B'PX^\e_t-R^{-1}B'p,
\end{equation*}
which is in feedback form. Finally, by the connection with $\ppe$ described above, we know that the optimal control $\hat{u}$ is then given by
\begin{equation*}
\hat{u}(t,x)=-R^{-1}B'Px-R^{-1}B'p,   
\end{equation*}
which does not depend on $\e$, and the equation solved by the optimal trajectory $\hat{\mu}_t^\e$ is 
\begin{equation}\label{eq:opt_traj_e}
\begin{cases}
\partial_t \hat{\mu}_t^\e+\dive\left(\hat{b}(t,x)\hat{\mu}^\e_t\right)=\e\Delta \hat{\mu}_t^\e,\\
\hat{\mu}_t^\e|_{t=0}=\mu_0,
\end{cases}
\end{equation}
where the vector field $\hat{b}$ is given by
\begin{equation*}
    \hat{b}(t,x)=[A-BR^{-1}B'P]x+\bar{A}\bar{x}_t-BR^{-1}B'p.
\end{equation*}
The equation \eqref{eq:opt_traj_e} is a linear advection-diffusion equation. It is still non-local, since the dependence on the barycenter is contained in the term $\hat{b}$.

\section{Proof of the main result} \label{s-proof}
In this section we prove our main result, that is Theorem \ref{t:main}. We show that the unique optimal pair $(\hat{\mu}^\e_t,\hat{u})$ of $\ppe$ converges to an optimal pair $(\hat{\mu}_t,\hat{u})$ for $\Pp$. 

\subsection{Preliminary lemma} 
In this subsection we prove a stability lemma for the continuity equation; we then apply it to our optimal control problem. It is worth to note that we assume that the control $u$ is fixed in the following Lemma.

\begin{lemma}\label{lem:2}
Let $b:\Pn\to\Lip((0,T)\times\R^d)$ be a vector field satisfying \eqref{def:lip}, such that there exists $C$ for which
$$
|b(t,x,\mu)|\leq C\left(1+|x|+\int_{\R^d}|y|\,\mu(\de y)\right).
$$
Let $\mu_t,\mu^\e_t$ be the unique solution of the deterministic \eqref{e:dP} and the viscous equation \eqref{e:ppe} with vector field $b$, respectively. It then holds
\begin{equation}\label{conv:we}
\lim_{\e\to 0}\sup_{t\in[0,T]}W_2(\mu^\e_t,\mu_t)=0.
\end{equation}
Moreover, the barycenter and costs converge too:
\begin{eqnarray*}
&&\bar{\mu}^\e_t\to\bar{\mu_t},\,\hspace{0.3cm}\mbox{as }\e\to 0,\,\mbox{ uniformly in }[0,T],\\
&&
J(\mu^\e_t,u)\to J(\mu_t,u), \hspace{0.3cm}\mbox{as }\e\to 0.
\end{eqnarray*}
\end{lemma}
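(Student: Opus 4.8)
The plan is to prove this vanishing viscosity stability result in three stages: first establish weak-$*$ convergence of $\mu^\e_t$ to $\mu_t$ along subsequences via uniqueness of the continuity equation, then upgrade to $W_2$-convergence by controlling second moments, and finally deduce convergence of the barycenter and the cost as corollaries.

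First I would establish uniform-in-$\e$ a priori bounds. Using the growth condition $|b(t,x,\mu)|\leq C(1+|x|+\int|y|\,\mu(\de y))$, I would test the viscous equation \eqref{e:ppe} against $|x|^2$ to obtain a Gr\"onwall estimate on the second moment $m_2^\e(t):=\int_{\R^d}|x|^2\mu^\e_t(\de x)$ that is uniform in $\e$ for $\e\in(0,1]$; the extra term $\e\int\Delta(|x|^2)\,\de\mu^\e_t=2d\,\e$ contributes only $O(\e T)$. This gives tightness of the family $\{\mu^\e_t\}$ in $\P$ uniformly in $t$. Next, following the commented-out Lemma \ref{lem:1}, a standard weak compactness argument extracts a subsequence with $\mu^\e_t\weaktos\lambda_t$ in $L^\infty((0,T);\mathcal{M}(\R^d))$; passing to the limit in the weak formulation, the diffusion term $\e\Delta\mu^\e_t$ vanishes (it is $O(\e)$ when tested against $C^\infty_c$ functions), so $\lambda_t$ solves the deterministic equation \eqref{eq:cont} with the \emph{same} vector field $b$. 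Here I must be careful that $b$ depends on $\mu$, so I need the weak convergence to pass through the nonlinearity: since $b$ depends on $\mu$ only through its barycenter-type moments and is Lipschitz in $W_2$ by \eqref{def:lip}, I would argue that $\mu^\e_t\weaktos\lambda_t$ together with moment convergence forces $b[\mu^\e_t]\to b[\lambda_t]$. By the uniqueness statement of Theorem \ref{thm:well_pos_cont}, $\lambda_t=\mu_t$, and since the limit is independent of the subsequence, the full family converges weakly-$*$.

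To upgrade to \eqref{conv:we}, I would use $|x|^2$ as a test function in both \eqref{e:ppe} and \eqref{e:dP}. The weak-$*$ convergence together with the uniform moment bound shows $m_2^\e(t)\to m_2(t)=\int|x|^2\mu_t(\de x)$ for each $t$ (the diffusion contributes the vanishing term $2d\,\e t$). By the characterization \eqref{conv_w}, pointwise weak-$*$ convergence plus convergence of second moments is equivalent to $W_2(\mu^\e_t,\mu_t)\to 0$ for each $t$. To obtain the \emph{uniform} statement $\sup_{t}W_2(\mu^\e_t,\mu_t)\to 0$, I would invoke equicontinuity: both $t\mapsto\mu^\e_t$ and $t\mapsto\mu_t$ are equi-Lipschitz in $W_2$ (with constant controlled by the growth bound on $b$, uniformly in $\e$), so pointwise convergence on the compact interval $[0,T]$ upgrades to uniform convergence by an Arzel\`a--Ascoli-type argument. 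The barycenter convergence \eqref{conv:barmue} then follows immediately, since $W_2$-convergence implies convergence of first moments, hence $\bar\mu^\e_t\to\bar\mu_t$ uniformly.

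Finally, convergence of the cost $J(\mu^\e,u)\to J(\mu,u)$ follows by inspecting \eqref{def:J}, \eqref{def:f}, \eqref{def:g}: the integrands are quadratic in $x$ and in the barycenter $\bar\mu_t$, so $J$ is a continuous functional of $(\mu_t,\bar\mu_t)$ with respect to the combination of uniform $W_2$-convergence and uniform barycenter convergence just established; uniform integrability coming from the uniform second-moment bound lets me pass to the limit in the space integrals, and dominated convergence handles the time integral. I expect \textbf{the main obstacle} to be the passage to the limit through the $\mu$-dependence of the vector field $b$: one must verify that weak-$*$ convergence of $\mu^\e_t$, combined with the moment bounds, genuinely yields convergence of $b[\mu^\e_t]$ strongly enough to identify the limiting equation, rather than merely extracting a limit whose vector field is unknown. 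The Lipschitz hypothesis \eqref{def:lip} on $b$ in $W_2$ is precisely what closes this gap, but the argument is circular-looking (one needs $W_2$-convergence to control $b$, yet is using $b$ to prove $W_2$-convergence), so the cleanest route is to first obtain weak-$*$ plus moment convergence by compactness alone, identify the limit via linearity/uniqueness, and only then upgrade the topology.
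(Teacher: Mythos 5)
Your overall architecture (compactness, identification of the limit by uniqueness, upgrade to $W_2$) resembles the paper's, but there is a genuine gap at the step where you claim that weak-$*$ convergence together with the uniform second-moment bound yields convergence of second moments, $m_2^\e(t)\to m_2(t)$, and hence (via \eqref{conv_w}) $W_2$-convergence. This implication is false: a uniform bound on $\int|x|^2\de\mu^\e_t$ gives tightness of the measures $\mu^\e_t$, but does not prevent the quantity $|x|^2\mu^\e_t$ from leaking mass to infinity. For instance, $\nu^\e=(1-\e)\delta_0+\e\,\delta_{x_\e}$ with $|x_\e|^2=1/\e$ has second moment equal to $1$ for every $\e$, converges weakly-$*$ to $\delta_0$, yet its second moment does not converge to $0$. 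What is needed --- and what the paper proves as a separate step, by testing the viscous equation against suitably chosen test functions and using the growth bound on $b$ --- is the uniform vanishing of the second-moment tails,
$$
\sup_{t\in[0,T]}\int_{B_R^c}|x|^2\,\mu^\e_t(\de x)\to 0 \quad\text{uniformly in $\e$ as } R\to\infty,
$$
i.e.\ uniform integrability of $|x|^2$ along the family $\{\mu^\e_t\}$. Only with this tail estimate does weak-$*$ convergence upgrade to $\sup_t W_2(\mu^\e_t,\lambda_t)\to 0$. Your proposal never establishes it, and both places where you invoke ``moment convergence'' (to push the nonlinearity $b[\mu^\e_t]\to b[\lambda_t]$ through the weak formulation, and to get $m_2^\e(t)\to m_2(t)$, where passing to the limit in $\int 2x\cdot b\,\de\mu^\e_t$ requires exactly the same uniform integrability) silently rely on it.

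This gap also undoes your proposed resolution of the circularity you correctly identified. You cannot identify $\lambda_t$ as a solution of the deterministic equation from weak-$*$ convergence alone, because the nonlocal term $\int\nabla\varphi\cdot b[\mu^\e_t]\,\de\mu^\e_t$ requires $b[\mu^\e_t]\to b[\lambda_t]$, which by \eqref{def:lip} requires $W_2(\mu^\e_t,\lambda_t)\to 0$; note also that in the lemma $b$ is a \emph{general} $W_2$-Lipschitz field, not necessarily a function of barycenter-type moments, so your aside reducing the $\mu$-dependence to first moments is not warranted by the hypotheses. The paper's order of operations is what actually breaks the circle: first weak-$*$ compactness, then the moment bound \emph{and} the tail estimate (both derived from the equation itself, with no limit passage needed), then $W_2$-convergence to the weak limit $\lambda_t$, and only at the end passage to the limit in the equation and identification $\lambda_t=\mu_t$ by uniqueness. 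Your proof becomes correct if you insert the tail estimate --- either as the paper does, or by proving a uniform bound on a moment of order strictly larger than $2$ (available here since $\mu_0$ has compact support, via the same Gr\"onwall argument) --- and then perform the identification of the limiting equation \emph{after}, not before, the $W_2$-convergence to $\lambda_t$ is in hand.
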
 
\begin{proof}
The core of the proof is to prove \eqref{conv:we}. Since the total mass is preserved for an advection-diffusion equation, and by standard weak compactness argument (see \cite{villani}) we know that, up to sub-sequences, there exists a measure $\lambda_t\in L^\infty((0,T);\P)$ such that
\begin{equation}\label{weak2}
\mu_t^\e\weaktos \lambda_t \,\mbox{ in } L^\infty((0,T);\mathcal{M}(\R^d)).
\end{equation}
Due to non-linearity in the vector field, a weak convergence as in \eqref{weak2} is not enough to pass to the limit in the equation. However, since $\mu^\e_t,\mu_t\in\Pn$, one can use $|x|^2$ as a test function in the equations. By a Gronwall type argument, thanks to the growth assumptions on $b$, it holds
\begin{equation*}
    \sup_{t\in[0,T]}\int_{\R^d}|x|^2\,\mu^\e_t(\de x)<C,
\end{equation*}
where $C>0$ is a constant independent on $\e$. This implies that any weak limit satisfies $\lambda_t\in \Pn$. Then, by properly choosing the test function in $\eqref{e:ppe}$ and, again by the growth assumptions on $b$, it also holds
\begin{equation*}
    \sup_{t\in[0,T]}\int_{B_R^c}|x|^2\,\mu^\e_t(\de x)\to 0,\hspace{0.3cm}\mbox{uniformly as }R\to\infty.
\end{equation*}
Such a result, together with weak convergence \eqref{weak2}, implies that 
\begin{equation}\label{conv:w2nu}
    \lim_{\e\to 0}\sup_{t\in[0,T]}W_2(\mu^\e_t,\lambda_t)=0.
\end{equation}
Because of the uniform Lipschitz assumption on $b$, the convergence in \eqref{conv:w2nu} is enough to pass to the limit in \eqref{e:ppe} and hence uniqueness for \eqref{e:dP} implies $\mu_t=\lambda_t$. Finally, convergence of the barycenter and of the costs follows from \eqref{conv:we}, by a direct computation.
\end{proof}



\subsection{Proof of Theorem \ref{t:main}}
We are now able to prove our main result Theorem \ref{t:main}.

{\it Proof of Theorem \ref{t:main}}. First of all, by the analysis given in Section IV, the convergence of controls $\uh^\e\to \uh$ is a direct consequence of the fact that the optimal control $\uh$ does not depend on $\e$. Then, we consider $\mh_t$ the unique solution of \eqref{e:dP} with control $\uh$. Since the control does not depend on the viscosity parameter, the convergence of optimal trajectory $\mh^\e\to\mh$ and of the cost $J(\mh^\e,\uh)\to J(\mh,\uh)$ is a direct consequence of Lemma \ref{lem:2}.

To conclude, we must show that $(\mh_t,\uh)$ is actually an optimal pair for $\Pp$. Let $u\neq\uh$ be a Lipschitz control and $\mu_t$ the corresponding trajectory. We define $\mu^\e_t$ to be the unique solution of \eqref{e:ppe} with control $u$ and, since $(\mu_t^\e,u)$ is an admissible pair for $\ppe$ and $(\mh^\e_t,\uh)$ is optimal, it holds
\begin{equation}\label{a}
J(\hat{\mu}^\e_t,\hat{u}) < J(\mu^\e_t,u).
\end{equation}
By $(iii)$ and Lemma \ref{lem:2}, it holds
\begin{equation}\label{b}
J(\hat{\mu}_t,\hat{u})=\lim_{\e\to 0} J(\hat{\mu}^\e_t,\hat{u}), \hspace{0.4cm} J(\mu_t,u)=\lim_{\e\to 0}J(\mu^\e_t,u),
\end{equation}
and combining \eqref{a} and \eqref{b} we get that
\begin{equation*}
J(\hat{\mu}_t,\hat{u})\leq J(\mu_t,u),
\end{equation*}
for any admissible pair $(\mu_t,u)$. Then, $(\mh_t,\uh)$ is an optimal pair for $\Pp$ and the proof is complete. \hfill $\blacksquare$

\section{CONCLUSIONS AND FUTURE PERSPECTIVES} \label{s-conclusions}

In this article, we proved that the vanishing viscosity method works properly when applied to the deterministic LQ mean-field control problem $\Pp$: one can add a small noise of amplitude $\eps$, solve the optimal control problem, and is ensured that for $\eps\to 0$ the limit is the solution of the deterministic LQ mean-field problem.

It is interesting now to study more general mean-field optimal control problems, in particular with coercive costs as in \cite{BR}. Moreover, it will be very interesting to study coupled mean-field controlled systems and mean-field games, i.e. with two levels of optimization.





\section*{ACKNOWLEDGMENT}

This work is funded by the University of Padua under the STARS Grants programme CONNECT: Control of Nonlocal Equations for Crowds and Traffic models.


\end{document}